\title{A generalisation of the Poincaré-Hopf Theorem.}
\author{Aaron Pim}
\begin{document}

\maketitle
\begin{abstract}
    The Poincaré-Hopf Theorem is a conservation law for real-analytic vector fields, which are tangential to a closed surface (such as a torus or a sphere). The theorem also governs real-analytic vector fields, which are tangential to surfaces with smooth boundaries; in these cases, the vector field must be pointing in the outward normal direction along the boundary \cite{Poincare_hopf,Poincare_Hopf_theorem}. In this paper, I will generalise the Poincaré-Hopf Theorem for real-analytic vector fields that are tangential to surfaces with piecewise smooth boundaries, and not parallel to the outward normal of the boundary.
\end{abstract}
\section{Introduction}
    A well-known result in topology is the \textit{Hairy Ball Theorem}, that, in layman's terms, states that you cannot comb the hairs on a tennis ball in one smooth motion without at least one tuft of hair. The Hairy Ball Theorem is a specific subcase of the Poincaré-Hopf Theorem, which is a conservation law for real-analytic vector fields, that are tangential to a closed surface (such as a torus or a sphere). The theorem also governs real-analytic vector fields, which are tangential to surfaces with smooth boundaries; in these cases, the vector field must be pointing in the outward normal direction along the boundary \cite{Poincare_hopf,Poincare_Hopf_theorem}.
    
    A singular value of a vector field may be characterised by a real value known as the \textit{degree}, which represents the number of times the vector field rotates counterclockwise around the singularity \cite{Brouwer_Degree, Winding_Number}. The index of a vector field is equal to the sum of the degree's. If the singular value is positioned in the interior of the domain, then the index may take values only in the integers; however, a singular value located on the boundary may take any real value \cite{Virga}. 
    
    The Poincaré-Hopf Theorem states that the sum of the indices of a vector field, that is tangential to a given surface, is equal to the Euler characteristic of the surface. The Euler characteristic is a topological invariant (constant) that describes the shape of a surface. For example, a sphere is a closed surface that has Euler characteristic of two, and therefore the index of the vector field must equal 2, implying that a continuous configuration does not exist.
    
    
    
    The author M. Morse derives a formula about the sum of the indices of the singular points of vector fields that are subject to generalised boundary conditions \cite{Morse_OG}. He utilised the notion of the Brouwer degree between manifolds to relate the Euler characteristic of a surface to the degree of the zeros of a smooth vector field and the degree of the boundary data. I will now present the result, using the notation of Canevari, Segatti and Veneroni \cite{Morse_LC}. 
    
    Let $S \subset \mathbb{R}^d$ be a compact connected orientable n-submanifold with boundary, $M \subset \mathbb{R}^d$ be a connected orientable manifold without boundary, and $\mathbf{U}:S \rightarrow M$ be a smooth map. The point $\underline{p}\in M \setminus \mathbf{U}(\partial S)$ is a regular value of $\mathbf{U}$ if $\text{det}(\nabla \mathbf{U})(\underline{s}) \neq 0$ for all $\underline{s} \in \mathbf{U}^{-1}(\underline{p})$. The Brouwer degree of the map $U$, at the point $\underline{p}$, \cite{Brouwer_Degree} is given by 
    \begin{equation}\nonumber
        \text{deg}(\mathbf{U}, S, \underline{p}) := \sum\limits_{\underline{s}\in \mathbf{U}^{-1}(\underline{p})}\text{sign}\left(\text{det}\left(\nabla \mathbf{U}\right) \right)(\underline{s}).
    \end{equation}
    As $\mathbf{U}$ is smooth, the only points where singular values may arise are the zeroes of the vector field. Therefore, the index may be given by $\text{deg}(\mathbf{U}, S, \underline{0})$. However, if the dimension of $S$ is less than the dimension of $M$, as is the case when modelling the director of a nematic shell, an appropriate projection map $\phi:S \rightarrow \mathbb{R}^d$ is implemented such that the index
    \begin{equation}\nonumber
        \text{ind}(\mathbf{U},S) := \text{deg}(\mathbf{U}\circ \phi^{-1}, \phi(S), \underline{0}),
    \end{equation}
    is well-defined. To account for the generalised boundary conditions, Morse considers the field $P_{\partial S}\mathbf{U}$ given by
    \begin{equation}\nonumber
        (P_{\partial S}\mathbf{U})(\underline{s}) := \text{proj}_{T_{\underline{s}}\partial S}\mathbf{U}(\underline{s}), \quad \forall \underline{s}\in \partial S,
    \end{equation}
    where $T_{\underline{s}}\partial S$ is the space of vector fields which are tangential to $\partial S$ at the point $\underline{s}\in \partial S$. Let the space $\partial_{-}S(\mathbf{U})\subset \partial S$ be defined by 
    \begin{equation}\nonumber
        \partial_{-}S(\mathbf{U}) := \{\underline{s} \in \partial S, \ (\mathbf{U}\cdot \nu)(\underline{s})\},
    \end{equation}
    where $\nu(\underline{s}) \in T_{\underline{s}}S$ is the outward pointing unit boundary normal of $S$.
    \begin{proposition}{Morse's index formula}
        If $\mathbf{U}$ is a continuous vector field over $S$ satisfying $\underline{0}\notin \mathbf{U}(\partial \partial S)$, with finitely many zeros and if $P_{\partial S}\mathbf{U}$ has finitely many zeros, then 
        \begin{equation}\label{chapter 2: Morse index formula}
            \text{ind}(\mathbf{U},S) + \text{ind}(P_{\partial S}\mathbf{U},\partial_{-}S) = \chi(S).
        \end{equation}
    \end{proposition}
    The terms $\text{ind}(\mathbf{U},S)$ and $\text{ind}(P_{\partial S}\mathbf{U},\partial_{-}S)$ may be interpreted as the number of complete rotations that the vector $\mathbf{U}$ along $\partial S$, relative to a fixed co-ordinate vector and the boundary tangent respectively. 
    
    The Morse index formula is a generalisation of the Poincaré-Hopf Theorem by permitting vertices along the boundary of $\partial S$ and boundary data which is not parallel to the boundary normal. I will generalise the Morse index formula by permitting finitely many zeroes along the boundary of $\partial S$. 
\section{Assumptions and definitions}
This section shall consist of a series of subsections, each one dedicated to defining a key concept which shall be used in the derivation of the generalised Poincaré-Hopf Theorem. I refer the reader to ``Differential Geometry of Curves and Surfaces''by Manfredo do Carmo \cite{Differential_Geo}, for more details regarding the notation and definitions in this paper.
\subsection{Assumptions on the regularity of the surface}\label{Surface assumptions}
    Let $S \subset \mathbb{R}^3$ be an orientable two-dimensional manifold embedded in three dimensions. It is assumed that there exists an open, connected and bounded set $\Omega \subset \mathbb{R}^2$, and a bijective, smooth chart $\mathbf{x}^{-1}: S\rightarrow \Omega$ with inverse $\mathbf{x} \in C^2(\overline{\Omega}, \overline{S})$. The natural trihedron of the surface is denoted $\{\mathbf{e}^1,\mathbf{e}^2, \mathbf{N}\}$, and is explicitly given by:
  	\begin{equation}\label{chapter 2: natural trihedron defn}
  	  \mathbf{x}^i := \dfrac{\partial \mathbf{x}}{\partial \omega_i}, \quad \mathbf{e}^i := \dfrac{\mathbf{x}^i}{|\mathbf{x}^i|}, \quad
  	  \mathbf{N}:= \dfrac{\mathbf{x}^1\times \mathbf{x}^2}{\left| \mathbf{x}^1\times \mathbf{x}^2 \right|}, \text{ for } \underline{\omega}:=(\omega_1,\omega_2) \in \Omega.
    \end{equation}
    I shall assume that the surface $S$ is such that the natural trihedron forms an orthonormal basis:    \begin{equation}\label{chapter 2: assumption of orthonormality}
        \mathbf{x}^1 \cdot \mathbf{x}^2 (\underline{\omega}) = 0 \quad \forall \underline{\omega} \in \Omega.
    \end{equation}
    Additionally, I will assume that the Gaussian curvature, denoted $K:\Omega \rightarrow \mathbb{R}$, is of the class $L^1(S)$. This is to ensure that the Gauss-Bonnet theorem is applicable to the surface. The proof of the generalised Poincaré-Hopf Theorem, utilises the Gaussian curvature, which may be expressed in terms of the derivatives of the normalised curvilinear basis vectors. An illustration of the curvilinear basis may be seen in figure (\ref{fig: Natural Trihedron}).
    \begin{figure}[thbp]
    \centering
    \includegraphics[width = 0.5 \textwidth]{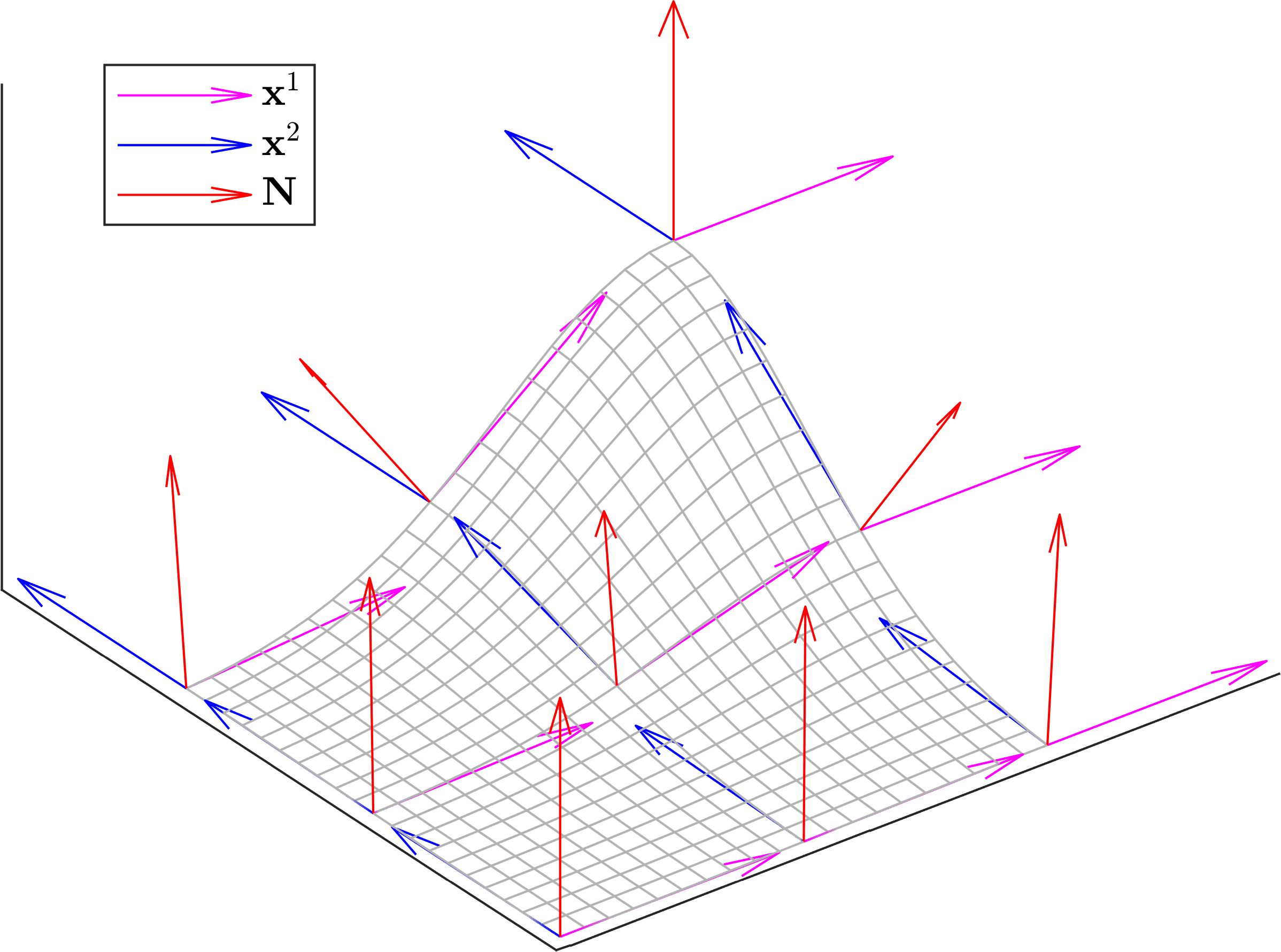}
    \caption{An illustration of the natural trihedron, note that despite the basis vectors changing with respect to the position, they remain orthogonal.}
    \label{fig: Natural Trihedron}
    \end{figure}
    \begin{lemma}
        If the parametrisation is orthogonal, the Gaussian curvature of the surface $S$ is given by
        \begin{equation}\label{Gaussian Curvature}
            K := \dfrac{1}{|\mathbf{x}^1||\mathbf{x}^2|}\left(\dfrac{\partial}{\partial \omega_2}\left(\dfrac{\partial \mathbf{e}^1}{\partial \omega_1}\cdot \mathbf{e}^2 \right)+\dfrac{\partial}{\partial \omega_1}\left(\dfrac{\partial \mathbf{e}^2}{\partial \omega_2}\cdot \mathbf{e}^1 \right)\right).
        \end{equation}
    \end{lemma}
    \begin{proof}
        See \cite{Differential_Geo}.
    \end{proof}
    \begin{remark}
        The orthogonality of the basis vector implies that
        \begin{equation}\nonumber
            \dfrac{\partial \mathbf{e}^2}{\partial \omega_j}\cdot \mathbf{e}^1 = -\dfrac{\partial \mathbf{e}^1}{\partial \omega_j}\cdot \mathbf{e}^2.
        \end{equation}
        Consequently, the assumption on the regularity of the Gaussian curvature may be expressed in the following manner,
        \begin{equation}\label{Assumption on the Christoffel symbols}
            K|\mathbf{x}^1||\mathbf{x}^2|=\dfrac{\partial F_1}{\partial \omega_2}-\dfrac{\partial F_2}{\partial \omega_1} \in L^1(\Omega), \text{ where }F_j:= \dfrac{\partial \mathbf{e}^1}{\partial \omega_j}\cdot \mathbf{e}^2.
  	    \end{equation}
    \end{remark}
    Surfaces of revolution, with a sufficiently smooth profile, are examples of surfaces which fulfil the above assumptions. 
\subsection{Assumptions on the regularity of the boundary}\label{Boundary assumptions}
    It is assumed that the boundary of the surface $S$, denoted $\partial S \subset \mathbb{R}^3$, may be expressed as the union of $M \in \mathbb{N}$ disjoint closed components, denoted $\partial S_1, \ldots, \partial S_M$. The case $M=0$ corresponds to a closed surface, in which the original Poincaré-Hopf Theorem may be applied. For $i=1,\ldots , M$, each boundary component $\partial S_i$ is parametrised by the Lipschitz continuous function $\underline{\gamma}_i:[0,l_i]\rightarrow \partial S_i$, that satisfies the following conditions:
    \begin{itemize}
        \item The constant $l_i>0$ is the arclength of the curve $\partial S_i$ and the function $\underline{\gamma}_i$ is parametrised by its arclength.
        \item The parametrisation satisfies $\underline{\gamma}_i(0)=\underline{\gamma}_i(l_i)$.
        \item There exists a finite number of values $0=:l^0_i< \ldots< l^n_i:=l_{i}$, which are referred to as the \textit{vertices} of $\partial S_i$, where the curve is not twice differentiable: \begin{equation}\label{chapter 2: boundary parametrisation}
            \underline{\gamma}_i \in \bigcup\limits_{j=1}^{n_i}C^2\left(\left(l^{j-1}_i,l^j_i\right),\partial S_i\right).
        \end{equation}
    \end{itemize}
    The number of vertices of $\partial S$ is denoted $\text{V}_{\partial S} \in \mathbb{N}\cup \{0\}$. Additionally, the exterior angles, of $\partial S$, are denoted $\tau_1, \ldots, \tau_{\text{V}_{\partial S}} \in (-\pi,\pi)\setminus \{0\}$. Note that the exterior angle is $\pi$ minus the interior angle, an illustration of the interior angle can be found in figure (\ref{fig:exterior angle three graphs}).
    \begin{figure}[htbp]
       \centering
       \begin{subfigure}[b]{0.3\textwidth}
         \centering
         \includegraphics[width=\textwidth]{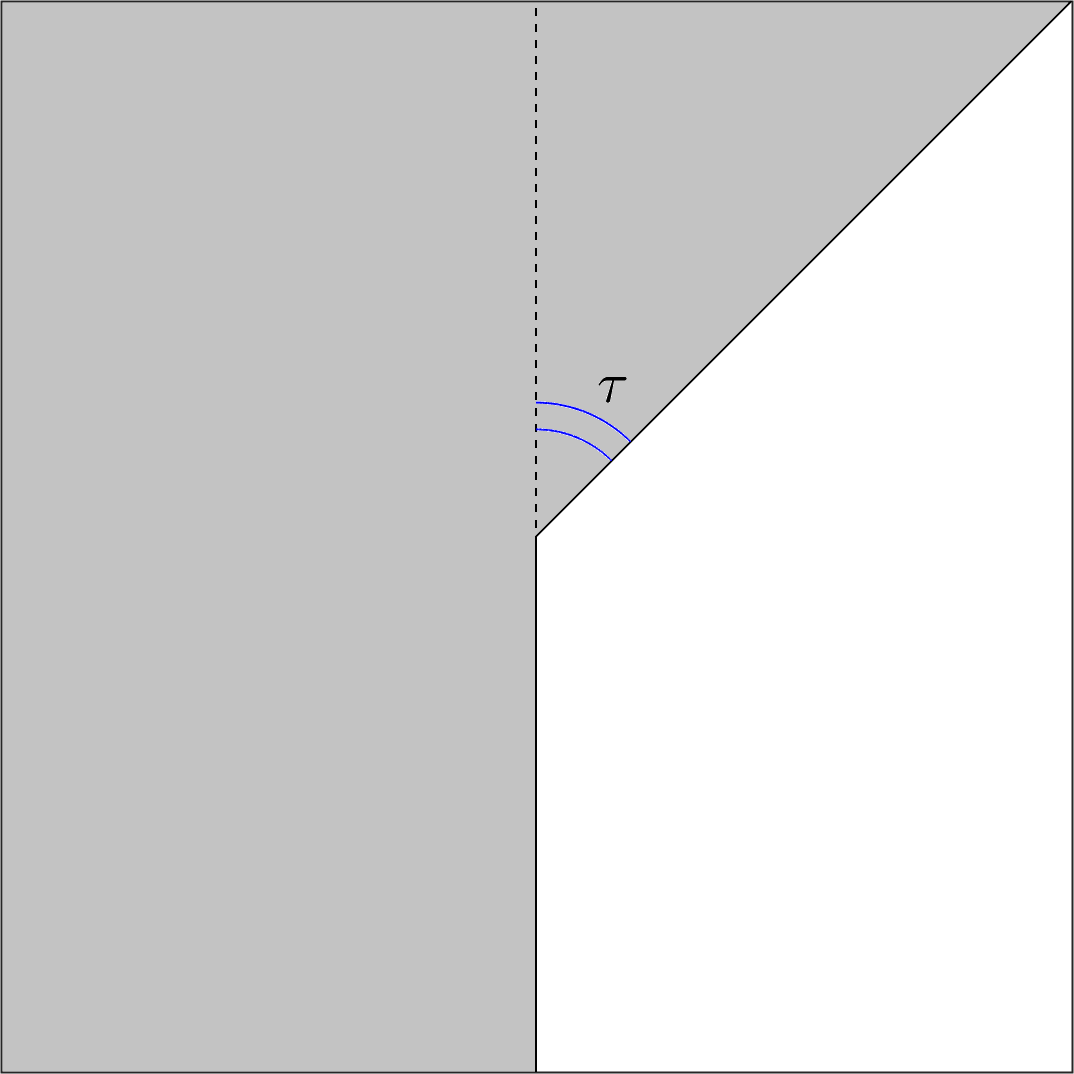}
         \caption{An illustration of $\tau < 0$, which corresponds to a reflex interior angle.}
         \label{subfig:tau<0}
       \end{subfigure}
       \hfill
       \begin{subfigure}[b]{0.3\textwidth}
         \centering
         \includegraphics[width=\textwidth]{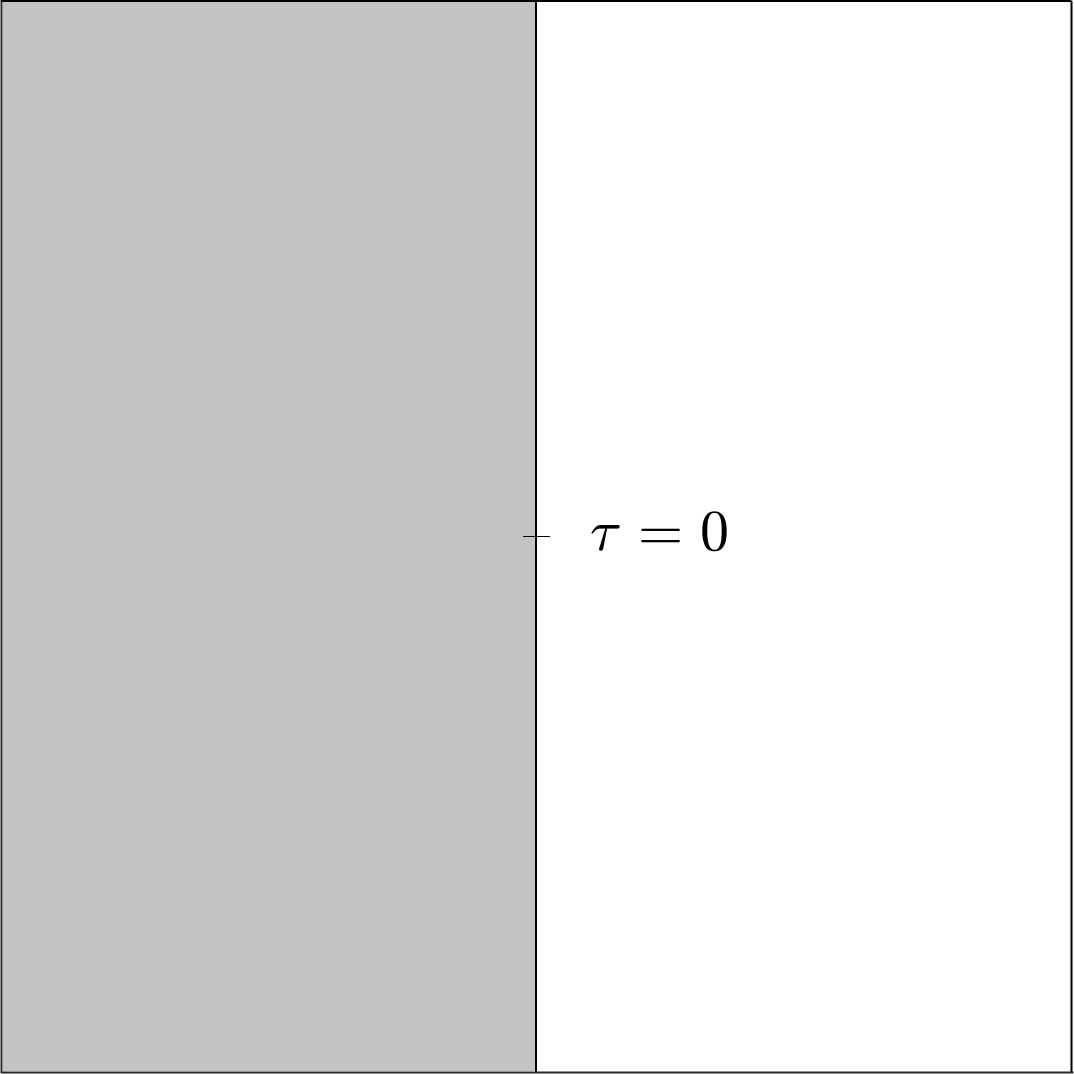}
         \caption{An illustration of $\tau = 0$, which corresponds to a flat interior angle.}
         \label{subfig:tau=0}
       \end{subfigure}
       \hfill
       \begin{subfigure}[b]{0.3\textwidth}
         \centering
        \includegraphics[width=\textwidth]{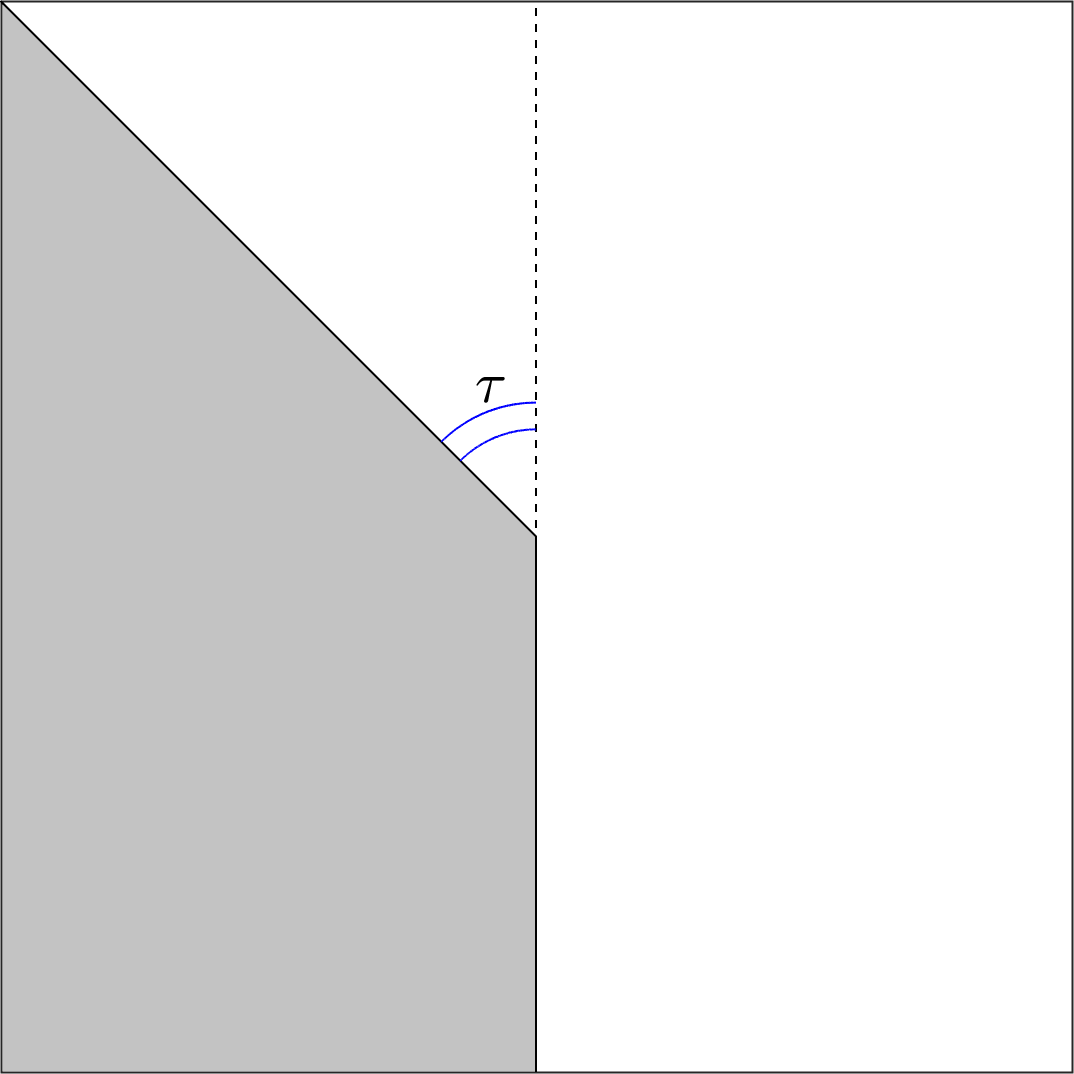}
         \caption{An illustration of $\tau > 0$, which corresponds to an acute or obtuse interior angle.}
         \label{subfig:tau>0}
       \end{subfigure}
        \caption{Above is a series of diagrams showing the behaviour of the exterior angle for varying signs. The sign of the exterior angle indicates the type of interior angle. If the exterior angle is positive then the interior angle is acute or obtuse, if the exterior angle is zero then the interior angle is flat, and if the exterior angle is negative then the interior angle is a reflex angle. Additionally, several equations (including the generalised Poincaré–Hopf Theorem) are explicitly dependent on the exterior angle.}
        \label{fig:exterior angle three graphs}
    \end{figure}   
    
    It is assumed that the geodesic curvature, denoted $kg:W^{2,1}([0,l],S)\rightarrow L^1([0,l])$, of the boundary component $\underline{\gamma}_i$ of class $L^1([0,l_i])$,
    \begin{equation}\label{chapter 2: geodesic curvature defin}
        kg\left(\underline{f}\right) := \underline{f}''\cdot \left(\left(\mathbf{N}\circ \mathbf{x}^{-1}\circ \underline{f}\right)\times \underline{f}'\right), \quad kg\left(\underline{\gamma}_i\right) \in L^1\left(\left(0,l_i\right)\right), \quad \forall i = 1, \ldots M.
    \end{equation}
     
\subsection{Definition of the triangulation of a surface}
    In this paper, there are two generalisations of the Poincaré-Hopf Theorem, the first for simple domains and the second for non-simple domains. The proof of the theorem for non-simple domains relies on the theorem for simple domains, where I partition the surface $S$ into a set of simple subdomains and then I apply the previous theorem to each subdomain. The particular partition that I require is known as a triangulation. In this subsection, I will define what I mean by a triangulation.
    \begin{definition}
        An open subregion $S_R\subset S$ is simple if it is homeomorphic to a disk. A simple subregion $T \subset S$ is a triangle if it has exactly three vertices (with non-zero exterior angles). I let:
        \begin{itemize}
            \item The set of vertices of the triangle $T$ be denoted $\mathcal{V}_T:=\{v^1_T, v^2_T, v^3_T\}$ (where $v^i_T\in \partial T$).
            \item The set of edges be denoted $\mathcal{E}_T:=\{E^1_T, E^2_T, E^3_T\}\subset T$.
        \end{itemize}
        I require that the set of edges and vertices satisfy $E^i_T \cap E^j_T = \emptyset$ for $i \neq j$, and $\partial T \setminus \mathcal{E}_T = \mathcal{V}_T$.
    \end{definition}
    I shall now define a triangulation of the surface, and express the Euler characteristic in terms of the triangulation. Those familiar with numerical finite elements methods would be familiar with this specific partition, as I divide the domain into a series of triangular subregions.
    \begin{definition}[Triangulation of the surface]\label{Triangulation of a surface}
        The surface $S$ is assumed to be such that there exists a finite partition, denoted $\{T_i\}_{i=1}^{\text{F}}$ of $S$. Let $\text{F}\in \mathbb{N}$ denote the number of \textit{faces}, each of which is a triangle.
        
        It is assumed that the edge of a triangle does not contain the vertex of another triangle,
        \begin{equation}\nonumber
            \partial T_i \cap \partial T_j = \overline{\mathcal{E}_{T_i} \cap \mathcal{E}_{T_j}}, \quad \forall i \neq j.
        \end{equation}
       The number of vertices of the triangulation is denoted $\text{V}$, and the number of edges is denoted $\text{E}$ (ignoring repeated vertices and edges respectively)
        \begin{equation}\nonumber
            \textrm{V}:= \left| \bigcup\limits_{i=1}^{\text{F}} \textrm{V}_{T_i}\right|, \quad \textrm{E}:=\left|\bigcup\limits_{i=1}^{\text{F}} \mathcal{E}_{T_i}\right|.
        \end{equation}
    \end{definition}
    \begin{remark}
        The Euler characteristic of the surface $S$, denoted $\chi \in - \mathbb{N} \cup \{0,1\}$, may be written in terms of the number of faces, edges, and vertices of a triangulation of $S$
        \begin{equation}\label{chapter 2: Euler Characteristic}
            \chi := \textrm{F} - \textrm{E} + \textrm{V}.
        \end{equation}
    \end{remark}
\subsection{Definition of admissible space of vector fields}
    In this subsection, I shall define the admissible class of unit vector fields, which are required to be tangential to the surface $S$ and permit point discontinuities. 
    \begin{definition}[Admissible space of functions]\label{Admissible space of functions}
        The vector field $\mathbf{u}:\overline{S} \rightarrow \mathbb{S}^2$ is of class $\mathcal{A}_{\tan}$ if and only if there exists a real analytic vector field $\mathbf{A} \in C^\infty(\overline{S},\mathbb{R}^3)$, that satisfies the two conditions:
        \begin{itemize}
            \item The zeros of the vector field $A$ are isolated.
            \begin{equation}\label{Definition of Jump Set}
                J_{\mathbf{u}}:= \left\{ \underline{s}\in \overline{S}\left| \mathbf{A}(\underline{s}) = \underline{0} \right. \right\}, \quad \forall \underline{s} \in J_{\mathbf{u}}, \ \exists \delta >0, \text{ s.t. } B_{\delta}(\underline{s})\cap \left(J_{\mathbf{u}}\setminus \{\underline{s}\}\right)= \emptyset.
            \end{equation}
            \item The vector field $A$ is tangential to the surface S.
            \begin{equation}\nonumber
                \mathbf{A}(\mathbf{x}(\underline{\omega}))\cdot \mathbf{N}(\underline{\omega}) = 0, \ \forall \underline{\omega} \in \Omega, \quad \mathbf{u}(\underline{s}):= \dfrac{\mathbf{A}}{|\mathbf{A}|}(\underline{s}), \ \underline{s} \in \overline{S}\setminus J_{\mathbf{u}}.
            \end{equation}
        \end{itemize}
    \end{definition}
    \begin{remark}[Consequences of the first assumption]
        The assumption that the zeros of $\mathbf{A}$ are isolated, implies that $J_{\mathbf{u}}$ is a discrete finite set and that $\mathbf{u} \in C^{\infty}(\overline{S}\setminus J_{\mathbf{u}}, \mathbb{S}^2)$.
    \end{remark}
    \begin{remark}[Consequences of the second assumption]
        The assumption that the vector field $\mathbf{u}$ is tangential to the surface, implies that it may be expressed as a two-dimensional vector field with respect to the curvilinear coordinates from equation (\ref{chapter 2: natural trihedron defn}). For each $\mathbf{u}\in \mathcal{A}_{\tan}$ there exists functions $(u_1,u_2) \in C^{\infty}(\overline{S}\setminus J_{\mathbf{u}}, \mathbb{S}^1)$ such that
        \begin{equation}\label{decomposition of a tangential field}
          	\mathbf{u}(\underline{s}) = u_1(\underline{s})\mathbf{e}^1(\mathbf{x}^{-1}(s)) + u_2(s)\mathbf{e}^2(\mathbf{x}^{-1}(s)), \quad \forall \underline{s} \in \overline{S}\setminus J_{\mathbf{u}}.
        \end{equation}
    \end{remark}
\subsection{Definition of the Christoffel symbols and covariant derivatives}
    In this subsection, I shall define the notation regarding differentiation with respect to a curved basis, reminding readers of the concepts of Christoffel symbols of the second kind, covariant derivatives, and algebraic values.

    For $i,j,k \in \{1,2\}$, let $\Gamma_{ij}^k$  denote the Christoffel symbols of the second kind, and $L_{ij}$ the entries of the second fundamental form respectively:
    \begin{align}\label{chapter 2: curvature definitions}
        \Gamma_{ij}^k &:=\dfrac{\mathbf{x}^{ij}\cdot \mathbf{x}^k}{|\mathbf{x}^k|^2},& L_{ij}&:=\mathbf{x}^{ij}\cdot \mathbf{N},& \text{ where }\quad \mathbf{x}^{ij}&:= \dfrac{\partial^2 \mathbf{x}}{\partial \omega_i \partial \omega_j}.
    \end{align}
    A more intuitive way to understand $\Gamma_{ij}^k$ and $L_{ij}$, is to express $\mathbf{x}^{ij}$ in curvilinear coordinates
    \begin{equation}\nonumber
		\mathbf{x}^{ij}= \Gamma_{ij}^k\mathbf{x}^{k} + L_{ij}\mathbf{N},
    \end{equation}
    and understand that $\Gamma^k_{ik}$ represents the component of $\mathbf{x}^{ij}$ in the $\mathbf{x}^k$ direction; similarly $L_{ij}$ represents the component of $\mathbf{x}^{ij}$ in the $\mathbf{N}$ direction. In the case when the surface is planar, then $L_{ij} = 0$.

    For a given $\mathbf{u} \in \mathcal{A}_{\tan}$ with components $(u_1,u_2) \in C^{\infty}(\overline{S}\setminus J_{\mathbf{u}}, \mathbb{S}^1)$, the partial derivative of the function $\overline{\mathbf{u}}:= \mathbf{u}\circ \mathbf{x}:\Omega \rightarrow \mathbb{S}^2$, with respect to $\omega_i$ is given by
    \begin{equation}\nonumber
        \dfrac{\partial \overline{\mathbf{u}}}{\partial \omega_j} = \left(\dfrac{\partial \overline{u}_i}{\partial \omega_j} +F_j\left(\overline{u}_1 \delta_{2i}-\overline{u}_2 \delta_{1i}\right) \right)\mathbf{e}^i + \left(Z_{ij}\overline{u}_i \right)\mathbf{N},
    \end{equation}
    where
    \begin{equation}\nonumber
        Z_{ij}:= \dfrac{\partial \mathbf{e}^{i}}{\partial \omega_j}\cdot \mathbf{N}.
    \end{equation}
    \begin{remark}
        The components of the vector $\mathbf{F}$, were defined in equation (\ref{Assumption on the Christoffel symbols}). Combining that expression with the above equations, the derivative of the basis vector $\mathbf{e}^j$ may be expressed in terms of $\mathbf{F}$ and $Z$:
        \begin{equation}\nonumber
            \dfrac{\partial \mathbf{e}^i}{\partial \omega_j} = \left(\dfrac{\partial \mathbf{e}^i}{\partial \omega_j}\cdot \mathbf{e}^k \right)\mathbf{e}^k + \left(\dfrac{\partial \mathbf{e}^i}{\partial \omega_j}\cdot \mathbf{N}\right)\mathbf{N} = F_j\left(\delta_{2i}\mathbf{e}^1-\delta_{1i}\mathbf{e}^2 \right)+Z_{ij}\mathbf{N}.
        \end{equation}
    \end{remark}
    Despite $\overline{\mathbf{u}}$ being tangential to the plane, the derivative of $\overline{\mathbf{u}}$ includes a normal component. This motivates the notion of the covariant derivative, which is defined as the projection of the derivative onto the tangent plane of the surface. This principle is illustrated in figure (\ref{fig: co-variant derivative}) and is given by 
    \begin{equation}\label{chapter 2: defn covariant derivative}
    	\dfrac{D\overline{\mathbf{u}}}{d\omega_i} := \dfrac{d \overline{\mathbf{u}}}{d\omega_i} - \left( \dfrac{d \overline{\mathbf{u}}}{d\omega_i}\cdot \mathbf{N}\right) \mathbf{N} = \left(\dfrac{\partial \overline{u}_j}{\partial \omega_i} +F_i\left(\overline{u}_1 \delta_{2j}-\overline{u}_2\delta_{1j} \right) \right)\mathbf{e}^j.
    \end{equation}
    \begin{figure}[htbp]
        \centering
        \includegraphics[width = 0.5\textwidth]{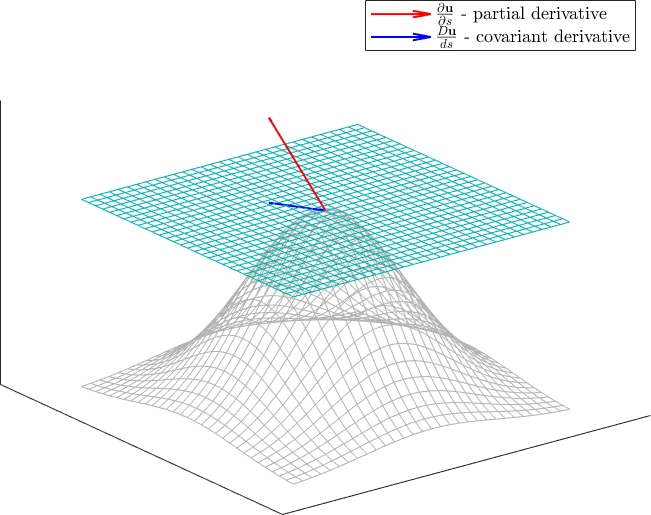}
        \caption{An illustration of the difference between the derivative and co-variant derivative.}
        \label{fig: co-variant derivative}
    \end{figure}
    The covariant derivative may be used to define key concepts, such as the algebraic value and the index of a vector field. To define these concepts I must consider the vector field restricted to a curve within the surface. Let $\underline{\beta} \in C_{\text{lip}}([0,L],S)$ be a curve parametrised by its arclength $L>0$; additionally, let $\hat{\mathbf{u}}:= \mathbf{u}\circ \underline{\beta}$ and $\hat{\mathbf{N}}:= \mathbf{N}\circ \underline{\beta}$, denote the restriction of $\mathbf{u}$ and $\mathbf{N}$ to the curve $\underline{\beta}$.

    The algebraic value, denoted $\left[\dfrac{D\mathbf{u}}{dt} \right]:[0,L] \rightarrow \mathbb{R}$, of the vector field $\hat{\mathbf{u}}$ is given by
    \begin{equation}\label{Definition of Algebraic Value}
        \left[\dfrac{D\mathbf{u}}{dt} \right] := \dfrac{D\hat{\mathbf{u}}}{dt} \cdot \left(\hat{\mathbf{N}}\times \hat{\mathbf{u}}\right).
    \end{equation}
    The algebraic value can be used to define the geodesic curvature of the curve $\underline{\gamma}$, defined in equation (\ref{chapter 2: geodesic curvature defin}), may be expressed in terms of the algebraic value of its derivative 
    $$ kg(\underline{\gamma}) = \left[\dfrac{D \underline{\gamma}'}{dt} \right].$$
    Additionally, the algebraic value can be used to define the angle between two unit tangent vectors. For arbitrary $\mathbf{u},\mathbf{v}\in \mathcal{A}_{\tan}$, the derivative of the angle from $\hat{\mathbf{u}}$ to $\hat{\mathbf{v}}$, denoted $\phi_{\mathbf{u},\mathbf{v}}:[0,L]\rightarrow \mathbb{R}$, is defined in terms of the algebraic value
    \begin{equation}\label{angle between two vector fields}
        \dfrac{d \phi_{\mathbf{u},\mathbf{v}}}{dt} = \left[\dfrac{D\mathbf{v}}{dt}\right]- \left[\dfrac{D\mathbf{u}}{dt}\right].
    \end{equation}
    As a direct consequence of the above equation and the orthogonality of the curvilinear basis, the derivative of the angle from $\hat{\mathbf{u}}$ to $\hat{\mathbf{e}}^1$, denoted $\phi:(0,L)\rightarrow \mathbb{R}$, is given by
    \begin{equation}\label{Equation 1}
        \left[\dfrac{D\mathbf{u}}{dt} \right]= \dfrac{d \left( \mathbf{F}\circ \underline{\beta}\right)}{dt} + \dfrac{d\phi}{dt}.
    \end{equation}
\subsection{Definition of the index of a vector field}
    One of the ways to characterise a tangential vector field, within a simple domain, is by a real value called the \textit{index} of the vector field. The index represents the number of times a vector field makes a complete counter-clockwise turn along a closed curve. If the vector field is smooth within the subregion, then the index of the vector field will be zero. Conversely, if the index of the vector field is non-zero, this implies the existence of discontinuities within the simple subregion. 
    \begin{definition}\label{definition index in terms of angles}
        Let the subregion $S_R \subset S$ be simple and with sufficiently smooth, positively oriented boundary $\partial S_R$, with parametrisation $\underline{\gamma}\in C([0,L],\partial S_R)$. For a given $\mathbf{u}\in \mathcal{A}_{\tan}$, I assume, without loss of generality, there is at most one point of discontinuity of $\mathbf{u}$ along $\partial S_R$
        \begin{equation}\label{chapter 2: index boundary integral parameterisation assumption}
            J_{\mathbf{u}}\cap \partial S_R = \emptyset \text{ or } J_{\mathbf{u}}\cap \partial S_R = \underline{\gamma}(0) = \underline{\gamma}(L).
        \end{equation}
        The index of $\mathbf{u}$ in $S_R$ is defined to be
        \begin{equation}\label{chapter 2: index defn}
            \textrm{Index}(\mathbf{u},S_R):= \dfrac{1}{2\pi}\int\limits_{0}^L\dfrac{d\phi}{ds}\ ds,
        \end{equation}
        where $\phi$ is the angle from $\mathbf{u}\circ \underline{\gamma}$ to $\mathbf{e}^1\circ \underline{\gamma}$.
    \end{definition}
    \begin{remark}
        A reader might ask the questions ``What if there were two singular values on the boundary of the simple domain? What if I want to know the index of a vector field on a non-simple domain?'' To answer both of those questions, I consider the work of Mermin, who investigated group theory properties of the index \cite{Defect_Group_theory}.
    
        For a given a finite sequence of non-intersecting, open, simple subregions, denoted $S^1_R,S_R^2, \ldots, S_R^N \subset S$, that satisfy equation (\ref{chapter 2: index boundary integral parameterisation assumption}). The index of the vector field in $\bigcup\limits_{i=1}^N S^i_R$ is the sum of the index of each subregion
        \begin{equation}\label{chapter 2: Summation of the index}
            \textrm{Index}\left(\mathbf{u},\bigcup\limits_{i=1}^N S^i_R\right) := \sum_{i=1}^N \textrm{Index}(\mathbf{u}, S^i_R).
        \end{equation}
    \end{remark}
    The index of a tangential vector field $\mathbf{u} \in \mathcal{A}_{\tan}$ may be expressed as an integral of the curvilinear components $u_1$ and $u_2$.
    \begin{lemma}\label{Surface index to 2D index}
        Let $\mathbf{u}\in \mathcal{A}_{\tan}$ have the decomposition given by equation (\ref{decomposition of a tangential field}) then I have that 
        \begin{equation}\nonumber
            \textrm{Index}(\mathbf{u}, S_R) = \dfrac{1}{2\pi}\oint\limits_{\partial S_R} u_1 du_2 - u_2 du_1.
        \end{equation}
    \end{lemma}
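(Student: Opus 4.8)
The plan is to reduce the surface integral defining $\text{Index}(\mathbf{u},S_R)$ to the planar line integral by expressing the integrand $\frac{d\phi}{ds}$ directly in terms of the components $(u_1,u_2)$. The starting point is Definition \ref{definition index in terms of angles} together with equation (\ref{Equation 1}), which rearranges to $\frac{d\phi}{dt} = \left[\frac{D\mathbf{u}}{dt}\right] - \underline{\beta}'\cdot\hat{\mathbf{F}}$. Thus it suffices to compute the Algebraic value $\left[\frac{D\mathbf{u}}{dt}\right]$ explicitly in terms of $\hat{u}_1,\hat{u}_2$ and to show that the curvature contribution $\underline{\beta}'\cdot\hat{\mathbf{F}}$ cancels, leaving only a term that integrates to the desired planar form.

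First I would substitute the given two-component expression for the covariant derivative $\frac{D\hat{\mathbf{u}}}{dt}$ into the defining formula (\ref{Definition of Algebraic Value}), namely $\left[\frac{D\mathbf{u}}{dt}\right] = \frac{D\hat{\mathbf{u}}}{dt}\cdot(\hat{\mathbf{N}}\times\hat{\mathbf{u}})$. Using that $\{\hat{\mathbf{e}}^1,\hat{\mathbf{e}}^2,\hat{\mathbf{N}}\}$ is a positively oriented orthonormal frame, so that $\hat{\mathbf{N}}\times\hat{\mathbf{e}}^1 = \hat{\mathbf{e}}^2$ and $\hat{\mathbf{N}}\times\hat{\mathbf{e}}^2 = -\hat{\mathbf{e}}^1$, the vector $\hat{\mathbf{N}}\times\hat{\mathbf{u}}$ expands as $\hat{u}_1\hat{\mathbf{e}}^2 - \hat{u}_2\hat{\mathbf{e}}^1$. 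Taking the inner product with $\frac{D\hat{\mathbf{u}}}{dt}$ and collecting terms produces $\left[\frac{D\mathbf{u}}{dt}\right] = \left(\hat{u}_1\frac{d\hat{u}_2}{dt} - \hat{u}_2\frac{d\hat{u}_1}{dt}\right) + (\hat{u}_1^2 + \hat{u}_2^2)\,\underline{\beta}'\cdot\hat{\mathbf{F}}$. Since $(u_1,u_2)\in\mathbb{S}^1$ we have $\hat{u}_1^2 + \hat{u}_2^2 = 1$, so the curvature term is exactly the one subtracted in equation (\ref{Equation 1}) and it cancels, giving $\frac{d\phi}{dt} = \hat{u}_1\frac{d\hat{u}_2}{dt} - \hat{u}_2\frac{d\hat{u}_1}{dt}$.

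Integrating this identity over $[0,L]$ and using $\hat{u}_i = u_i\circ\underline{\gamma}$ to pull the integral back onto the boundary curve $\partial S_R$, the right-hand side becomes precisely $\oint_{\partial S_R} u_1\,du_2 - u_2\,du_1$. Dividing by $2\pi$ and invoking Definition \ref{definition index in terms of angles} then yields the claimed identity.

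The computation is elementary once the frame orientation is fixed, and I expect the only delicate points to be bookkeeping ones rather than a genuine obstacle. First, one must commit to the orientation convention for $\{\hat{\mathbf{e}}^1,\hat{\mathbf{e}}^2,\hat{\mathbf{N}}\}$ and to the sign convention for $\phi$ consistently with equation (\ref{Equation 1}), since the wrong orientation flips the global sign of the identity. Second, although $\frac{d\phi}{ds}$ is only assumed integrable, the hypothesis that $\partial S_R$ meets $J_{\mathbf{u}}$ in at most one point guarantees that $u_1,u_2$ are smooth along the oriented closed boundary away from a removable set; this is what legitimises both the pullback and the interpretation of the resulting expression as the line integral $\oint_{\partial S_R} u_1\,du_2 - u_2\,du_1$.
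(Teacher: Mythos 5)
Your proposal is correct and follows essentially the same route as the paper's own proof: expand the Algebraic value $\left[\frac{D\mathbf{u}}{dt}\right]$ in the frame $\{\hat{\mathbf{e}}^1,\hat{\mathbf{e}}^2,\hat{\mathbf{N}}\}$, use $\hat{u}_1^2+\hat{u}_2^2=1$ to cancel the $\underline{\beta}'\cdot\hat{\mathbf{F}}$ term against equation (\ref{Equation 1}), conclude $\frac{d\phi}{dt}=\hat{u}_1\frac{d\hat{u}_2}{dt}-\hat{u}_2\frac{d\hat{u}_1}{dt}$, and integrate. The added remarks on orientation conventions and on the regularity of $(u_1,u_2)$ along $\partial S_R$ are sensible bookkeeping that the paper leaves implicit.
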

    \begin{proof}
        The definition of the algebraic value from equation (\ref{Definition of Algebraic Value}) and the decomposition of $\mathbf{u}$ from equation (\ref{decomposition of a tangential field}) implies that \begin{align}
            \left[\dfrac{D\mathbf{u}}{dt} \right] &= \dfrac{d\hat{\mathbf{u}}}{dt}\cdot \left(\hat{\mathbf{N}}\times \hat{\mathbf{u}} \right), \nonumber \\ 
            & = \hat{u}_1\left( \dfrac{d \hat{u}_2}{dt}+\hat{u}_1\dfrac{d \left( \mathbf{F}\circ \underline{\beta}\right)}{dt}\right) - \hat{u}_2\left(\dfrac{d\hat{u}_1}{dt} - \hat{u}_2 \dfrac{d \left( \mathbf{F}\circ \underline{\beta}\right)}{dt}\right), \nonumber \\ &= \hat{u}_1\dfrac{d \hat{u}_2}{dt} - \hat{u}_2\dfrac{d\hat{u}_1}{dt} + \dfrac{d \left( \mathbf{F}\circ \underline{\beta}\right)}{dt}. \nonumber
        \end{align}
        Comparing the above result with equation (\ref{Equation 1}), I immediately deduce that $$\dfrac{d\phi}{dt}=\hat{u}_1\dfrac{d \hat{u}_2}{dt} - \hat{u}_2\dfrac{d\hat{u}_1}{dt},$$ which, when substituted into the definition of Index from equation (\ref{chapter 2: index defn}), yields the result.
    \end{proof}
\section{Derivation of the generalisation of the Poincaré-Hopf Theorem}
\subsection{Gaussian curvature and the Gauss-Bonnet theorem}
    In this section, the generalisation of the Poincaré-Hopf Theorem will be proven, beginning with the derivation of the relation between the Gaussian curvature and the index of a vector field.
    \begin{lemma}
        Let $S_R\subset S$ be a simple subregion with a sufficiently smooth, positively oriented boundary $\partial S_R$, parametrised by $\underline{\gamma} \in C([0,L],\partial S_R)$. The vector field $\mathbf{u}\in \mathcal{A}_{\tan}$ satisfies
        \begin{equation}\label{Integral of Gaussian Curvature definition}
            \int\limits_{0}^L\left( \dfrac{d \phi}{dt} - \left[\dfrac{D\mathbf{u}}{dt} \right] \right) \ dt = \iint\limits_{S_R}K \ d\sigma,
        \end{equation}
        In the above equation, $\phi$ denotes the angle between $\mathbf{u} \circ \underline{\gamma}$ and $\mathbf{e}^1\circ \underline{\gamma}$, $K \in L^1(S)$ denotes the Gaussian curvature, defined in equation (\ref{Gaussian Curvature}), and $d \sigma := |\mathbf{x}^1||\mathbf{x}^2| d\underline{\omega}$ denotes the differential with respect to curvilinear coordinates.
    \end{lemma}
    \begin{proof}
        Consider the integral of equation (\ref{Equation 1}) over the interval $(0,L)$,
        \begin{equation}\label{temp 1}
            \int\limits_{0}^L\left( \dfrac{d \phi}{dt} - \left[\dfrac{D\mathbf{u}}{dt} \right] \right) \ dt = -\int\limits_{0}^L\underline{\beta}'\cdot (\mathbf{F}\circ \underline{\beta}) dt.
        \end{equation}
        Let the simple subregion $R \subset \Omega$ be defined by $R := \mathbf{x}^{-1}(S_R)$, with boundary $\partial R$ parametrised by $\underline{\beta}:=\mathbf{x}^-1\circ \underline{\gamma}$. Applying Green's Theorem and the definition of Gaussian curvature, from equation (\ref{Gaussian Curvature}), to equation (\ref{temp 1}) yields 
        \begin{equation}\nonumber
            \int\limits_{0}^L\left( \dfrac{d \phi}{dt} - \left[\dfrac{D\mathbf{u}}{dt} \right] \right) \ dt = -\iint\limits_{R}\left(\dfrac{\partial F_2}{\partial \omega_1}-\dfrac{\partial F_1}{\partial \omega_2} \right)d \underline{\omega} = \iint\limits_{R}K |\mathbf{x}^1||\mathbf{x}^2|d \underline{\omega}.
        \end{equation}
    \end{proof}
    The Gauss-Bonnet Theorem is a critical in proving the generalisation of the Poincaré-Hopf Theorem for simple surfaces. The Gauss-Bonnet Theorem states that a surface $S$, that satisfies the assumptions from sections (\ref{Surface assumptions})-(\ref{Boundary assumptions}), also satisfies the following \cite{Differential_Geo}
    \begin{equation}\nonumber 
        2\pi \chi(S) = \oint\limits_{\partial S} kg(\underline{\gamma}) \ ds + \iint\limits_S K \ d\sigma + \sum_{i=1}^{\text{V}_{\partial S}} \tau_i,
    \end{equation}
    where $\underline{\gamma}:[0,L]\rightarrow \partial S$ is the parametrisation of $\partial S$, for $L >0$ the arclength of $\partial S$.
\subsection{Proof of the generalised Poincaré-Hopf for simple surfaces}
    \begin{theorem}[Generalised Poincaré-Hopf for simple surfaces]\label{Generalised Poincaré-Hopf for simple surfaces}
        Let $S_R\subset S$ be a simple subregion, with sufficiently smooth boundary $\partial S_R$ parametrised by $\underline{\gamma} \in C([0,L],\partial S_R)$. Let $\tau_1, \ldots, \tau_{V_{e,\partial S_R}} \in (-\pi,\pi)$ denote the exterior angles of $S_R$. Let $\mathbf{u}\in \mathcal{A}_{\tan}$ satisfy $J_{\mathbf{u}}\cap \partial S_R \in \{\emptyset,\{\underline{\gamma}(0)\}\}$. The field $\mathbf{u}$ satisfies
        \begin{equation}\label{Simple domain Poincare Hopf generalisation}
            2\pi = \sum_{i=1}^{\text{V}_{\partial S_R}} \tau_i + \oint\limits_{\partial S_R} \dfrac{d \theta}{dt}\ dt + 2\pi \textrm{Index}(\mathbf{u},S_R),
        \end{equation}
        where $\theta$ is the angle between $\mathbf{u}\circ \underline{\gamma}$ and the unit tangent to the boundary $\underline{\gamma}'$. 
    \end{theorem}
    \begin{remark}
        Although the Gauss-Bonnet Theorem can be applied to non-simple surfaces, in this section I shall focus on its application to simple surfaces. I shall use theorem \ref{Generalised Poincaré-Hopf for simple surfaces} with lemma \ref{Integral of Gaussian Curvature definition} to deduce a generalised Poincaré-Hopf for simple surfaces.  
    \end{remark}
    \begin{proof}
        The definition of a simple domain implies that it is homeomorphic to a disk, consequently it has Euler characteristic $\chi =1$. Applying the Gauss-Bonnet Theorem, to the surface $S_R$, and substituting equation (\ref{Integral of Gaussian Curvature definition}) yields
        \begin{align}
            2\pi &= \sum_{i=1}^{\text{V}_{\partial S_R}} \tau_i + \oint\limits_{\partial S_R} kg(\underline{\gamma}) \ dt + \iint\limits_{S_R} K \ d\sigma \nonumber \\ &= \sum_{i=1}^{\text{V}_{\partial S_R}} \tau_i + \int\limits_{0}^L \left( kg(\underline{\gamma}) - \left[\dfrac{D\mathbf{u}}{dt} \right] \right)\ dt + \int\limits_{0}^L\dfrac{d \phi}{dt} \ dt,
        \end{align}
        where $\phi$ is the angle between $\mathbf{u}\circ \underline{\gamma}$ and $\mathbf{e}^1\circ \underline{\gamma}$. The definition of the index of a vector field, from equation (\ref{definition index in terms of angles}), implies that 
        \begin{equation}\nonumber
            2\pi = \sum_{i=1}^{\text{V}_{\partial S_R}} \tau_i + \int\limits_{0}^L \left( kg(\underline{\gamma}) - \left[\dfrac{D\mathbf{u}}{dt} \right] \right)\ dt + 2\pi \textrm{Index}(\mathbf{u},S_R).
        \end{equation}
        As $S_R$ being parametrised by its arclength, this implies that $|\underline{\gamma}'|=1$. Additionally, the geodesic curvature can be expressed as the algebraic value of $\underline{\gamma}'$. Thus, equation (\ref{angle between two vector fields}) implies that 
        \begin{equation}\label{chapter 2: defn of theta}
            kg(\underline{\gamma}) - \left[\dfrac{D\mathbf{u}}{dt} \right] = \left[\dfrac{D\underline{\gamma}'}{dt} \right] - \left[\dfrac{D\mathbf{u}}{dt} \right] = \dfrac{d\theta}{dt},
        \end{equation}
        where $\theta$ is the angle between $\mathbf{u}\circ \underline{\gamma}$ and $\underline{\gamma}'$.
    \end{proof}
\subsection{Proof of the generalised Poincaré-Hopf for non-simple surfaces}
    \begin{theorem}[Generalised Poincaré-Hopf for non-simple surfaces]
        For a surface $S$, the vector field $\mathbf{u}\in \mathcal{A}_{\tan}$ satisfies the following conservation law,
        \begin{equation}\label{Conservation law for defects}
            \chi(S) = \sum_{k=1}^{\text{V}_{\partial S}} \dfrac{\tau_k}{2\pi} + \dfrac{1}{2\pi}\oint\limits_{\partial S} \dfrac{d\theta}{ds}\ ds + \rm{Index}(\mathbf{u},\mathit{S}),
        \end{equation}
        where:
        \begin{itemize}
            \item $\chi$ is the Euler Characteristic, given in equation (\ref{chapter 2: Euler Characteristic}).
            \item $\theta$ is the angle between $\mathbf{u}$ and the tangent to the boundary, given in equation (\ref{chapter 2: defn of theta}).
            \item $\tau_1, \ldots, \tau_{\text{V}_{\partial S}}$ are the exterior angles of $S$, from Section (\ref{Boundary assumptions}).
        \end{itemize}
    \end{theorem}
    \begin{remark}
        In the previous subsection, the generalisation was derived for non-simple surfaces. To expand this result to non-simple surfaces, the surface $S$ will be triangulated and equation (\ref{Simple domain Poincare Hopf generalisation}) will be applied to each triangular component. Additionally, comparing this result to the Morse index formula in equation (\ref{chapter 2: Morse index formula}) it is clear that
        \begin{equation}
            \text{ind}(P_{\partial S}\mathbf{U},\partial_{-}S) = \dfrac{1}{2\pi}\oint\limits_{\partial S} \dfrac{d\theta}{ds}\ ds + \sum_{k=1}^{\text{V}_{\partial S}} \dfrac{\tau_k}{2\pi}, \quad \text{ind}(\mathbf{U},S) = \text{Index}(\mathbf{u},S).
        \end{equation}
        However, Morse's formula did not permit singular values on the boundary of the surface, which my formula accounts for.
    \end{remark}
\begin{proof}
    Let $\mathcal{T}:=\{T_i\}_{i=1}^{\text{F}}$ denote the triangulation of the surface $S$, see Definition \ref{Triangulation of a surface}. For each triangle $T_i \subset S$ let:
    \begin{itemize}
        \item $\mathcal{E}_{T_i}=\{e_{ij}\}_{j=1}^3 \subset S$ denote the set of edges of $T_i$.
        \item $\{\tau_{ij}\}_{j=1}^3 \subset (-\pi,\pi)\setminus \{0\}$ denote the set of exterior angles of $T_i$.
        \item $\underline{\gamma}_i \in C([0,L_i],\partial T_i)$ parametrise the boundary $\partial T_i$; without loss of generality, it is assumed that $\underline{\gamma}_i$ is parametrised by its arclength and is positively oriented.
    \end{itemize}
    For a given $\mathbf{u}\in \mathcal{A}_{\tan}$ it is assumed that the triangulation satisfies
    \begin{equation}\nonumber
        \partial T_i \cap J_{\mathbf{u}} = \emptyset, \text{ or }\partial T_i \cap J_{\mathbf{u}} = \underline{\gamma}_i(0)=\underline{\gamma}_i(L_i)\quad \forall i = 1,\ldots, \text{F}.
    \end{equation}
    For this section, the following notation shall be implemented
    \begin{align}
        \text{E} = & \text{ number of edges of }\mathcal{T}\nonumber \\
        \text{V} = & \text{ number of vertices of }\mathcal{T}\nonumber \\
        \text{E}_{\partial S}               =& \text{ number of external edges of }\mathcal{T}\nonumber \\
        \text{E}_{S}                        =& \text{ number of internal edges of }\mathcal{T} \text{ (ignoring repeated edges)}.\nonumber \\
        \text{V}_{\partial \mathcal{T}}     =& \text{ number of external vertices of }\mathcal{T}.\nonumber \\
        \text{V}_{\partial S}               =& \text{ number of vertices of }\partial S. \nonumber \\
        \text{V}_{\partial \mathcal{T} / S} =& \text{ number of vertices of the triangulation which are not vertices of }\partial S. \nonumber \\
        \text{V}_{\mathcal{T}}              =& \text{ number of internal vertices of }\mathcal{T}.\nonumber \\
        \sigma_{ij}                         :=& \pi - \tau_{ij}, \text{ the interior angles of the triangle }T_i. \nonumber
    \end{align}

        The vector field $\mathbf{u} \in \mathcal{A}_{\tan}$ satisfies
        \begin{align}
            2\pi &= \sum_{j=1}^3\tau_{ij} + \sum_{j=1}^3\int\limits_{e_{ij}} \dfrac{d\theta_i}{dt}dt + 2\pi \textrm{Index}(\mathbf{u},T_i),\nonumber \\
            2\pi \text{F} &= \sum_{i=1}^{\text{F}}\sum_{j=1}^3 \tau_{ij} + \sum_{i=1}^{\text{F}}\sum_{j=1}^3 \int\limits_{e_{ij}} \dfrac{d\theta_i}{dt}dt + 2\pi \sum_{i=1}^{\text{F}}\textrm{Index}(\mathbf{u},T_i).\label{Appendix: Faces summation}
        \end{align}
        where $\theta_i$ is the angle between $\mathbf{u}\circ \underline{\gamma}_i$ and $\underline{\gamma}_i'$. Consider a pair of distinct triangles $T_{i}$ and $T_{k}$ that share an edge $e_{in} = e_{km}$, for $n,m \in \{1,2,3\}$. As the boundary of each triangle is positively oriented, the following identity is obtained
        \begin{equation}\label{appendix: angle integral}
            \int\limits_{e_{in}} \dfrac{d\theta_{i}}{dt}dt = - \int\limits_{e_{km}} \dfrac{d\theta_{k}}{dt}dt.
        \end{equation}
        The only edges which are not shared between triangles are ones which intersect the boundary $\partial S$, and consequently
        \begin{equation}\nonumber
            \sum_{i=1}^{\text{F}}\sum_{j=1}^3 \int\limits_{e_{ij}} \dfrac{d\theta_i}{dt}dt = \oint\limits_{\partial S} \dfrac{d\theta}{dt}dt,
        \end{equation}
        As a consequence of the above and the summation properties of the index, from equation (\ref{chapter 2: Summation of the index}), equation (\ref{Appendix: Faces summation}) is simplified to 
        \begin{equation}\nonumber
            2\pi \text{F} = \sum_{i=1}^{\text{F}}\sum_{j=1}^3 \tau_{ij} + \oint\limits_{\partial S} \dfrac{d\theta}{dt}dt + 2\pi\textrm{Index}(\mathbf{u},S)
        \end{equation}
        Consequently, the only term left to simplify is the sum of the exterior angles; it is clear that
        \begin{equation}\nonumber 
            \text{V}_{\partial \mathcal{T}/S} + \text{V}_{\partial S} = \text{V}_{\partial \mathcal{T}}, \quad \sum_{i=1}^{\text{F}}\sum_{j=1}^3 \tau_{ij} = \sum_{i=1}^{\text{F}}\sum_{j=1}^3 (\pi - \sigma_{ij}) = 3\pi \text{F} - \sum_{i=1}^{\text{F}}\sum_{j=1}^3 \sigma_{ij}.
        \end{equation}
        As each triangle has three edges, the sum of all edges (including duplicate edges) is equal to three times the number of triangles, $3\text{F} = 2\text{E}_S + \text{E}_{\partial S}$. Consequently, 
        \begin{equation}\label{appendix: angle sum intermediate}
            \sum_{i=1}^{\text{F}}\sum_{j=1}^3 \tau_{ij} = 2\pi \text{E}_S + \pi \text{E}_{\partial S} - \sum_{i=1}^{\text{F}}\sum_{j=1}^3 \sigma_{ij}.
        \end{equation}
        If a vertex is internal, then the sum of internal angles at that vertex is $2\pi$; if a vertex is external but not a vertex of $\partial S$, then the sum of internal angles at that vertex is $\pi$. Recalling that $\{\tau_k\}_{k=1}^{\text{V}_{\partial S}}$ is the set of external angles of $\partial S$, it is clear that
        \begin{equation}\nonumber
            \sum_{i=1}^{\text{F}}\sum_{j=1}^3 \sigma_{ij} = 2\pi \text{V}_{\mathcal{T}} + \pi \text{V}_{\partial \mathcal{T}/ S} + \sum_{k=1}^{\text{V}_{\partial S}} (\pi-\tau_k).
        \end{equation}
        As the boundary curves are closed, it is clear that $\text{E}_{\partial S} = \text{V}_{\partial \mathcal{T}}$ and $\text{E}_{\partial S} = 2\text{E}_{\partial S} - \text{V}_{\partial \mathcal{T}}$. Hence
        \begin{equation}\label{appendix: sum of exterior angles tessaltion}
            \sum_{i=1}^{\text{F}}\sum_{j=1}^3 \tau_{ij} = 2\pi (\text{E}_S+ \text{E}_{\partial S}) - 2\pi (\text{V}_{\mathcal{T}} + \text{V}_{\partial \mathcal{T}}) + \sum_{k=1}^{\text{V}_{\partial S}} \tau_k.
        \end{equation}
        It is clear that $\text{E} = \text{E}_S+ \text{E}_{\partial S}$ and $\text{V} = \text{V}_{\mathcal{T}}+ \text{V}_{\partial\mathcal{T}}$. Consequently, substituting equations (\ref{appendix: angle integral}) and (\ref{appendix: sum of exterior angles tessaltion}) into (\ref{Appendix: Faces summation}) yields
        \begin{equation}\nonumber
            2\pi(\text{V}-\text{E}+\text{F}) = \sum_{k=1}^{\text{V}_{\partial S}} \tau_k + \oint\limits_{\partial S} \dfrac{d\theta}{ds}\ ds + 2\pi \textrm{Index}(\mathbf{u},S).
        \end{equation}
        The definition of the Euler characteristic from equation (\ref{chapter 2: Euler Characteristic}) implies the result.
    \end{proof}
\section{Conclusion and future work}
    In this paper, I derived a generalisation of the Poincaré-Hopf Theorem for surfaces with non-smooth line boundaries and for vector fields with arbitrary boundary conditions. Although the theorem has been described as a generalisation, a number of assumptions can be relaxed in future explorations of this topic. One assumption that could be relaxed, is the assumption that the manifold must be expressed as a single smooth chart $\mathbf{x}^{-1}:S \rightarrow \Omega$, rather than an atlas of charts \cite{JostJurgen2013RGaG}. Future work could focus on deriving a similar result for a given atlas; although I hypothesise that the equation (\ref{Conservation law for defects}) would be applicable to such surfaces. The proof would most likely involve applying equation (\ref{Conservation law for defects}) on each chart and then simplifying the sum of angles in a similar manner to the one found in equation (\ref{appendix: angle sum intermediate}).
  
    A second direction for future research could be the derivation of a conservation law for the indices of an analytic vector field in domains of higher dimension. Poincaré derived the theorem for two-dimensional vector fields \cite{Poincare_original} and Hopf generalised the result for higher dimensions \cite{Hopf_original}. Additionally, the index of an analytic vector field, in higher dimensions, can be defined and explicitly calculated \cite{Brouwer_Degree}. The goal of this future research would be to derive a similar result without the assumption that the vector field must be parallel to the boundary of the manifold.
  
    A third direction for future research could be to remove the assumption that the zeroes of the unit vector field are isolated. The index of a vector field and the Poincaré–Hopf Theorem may be extended for vector fields with non-isolated zeroes \cite{BrasseletJean-PaulVfoS}.
\section*{Acknowledgements}
I am grateful for the financial support of the University of Bath's department of Mathematical Sciences. Additionally, I am thankful for the support and guidance of Prof. Kirill Cherednichenko, Prof. Apala Majumdar, and Prof. Jeybal Sivaloganathan, whose guidance in the aspects of functional analysis and liquid crystals was invaluable. 
\printbibliography
\end{document}